\newtheorem{thm}{Theorem}
\newtheorem{cor}{Corollary}
\newtheorem{lem}{Lemma}
\newtheorem{conj}{Conjecture}
\date{}
\title{\bf Chromatic Number and Dichromatic Polynomial of Digraphs}
\author{
\bf S. Akbari$^a$,
A.H. Ghodrati$^{a,c}$,
A. Jabalameli$^b$,
M. Saghafian$^a$
\footnote{E-mail addresses: s$\_$akbari@sharif.edu (S. Akbari),
ghodrati$\_$ah@mehr.sharif.ir (A.H. Ghodrati),
jabal.farsh@gmail.com (A. Jabalameli)
saghafian$\_$morteza@mehr.sharif.ir (M. Saghafian)}\\
$^a$\small Department of Mathematical Sciences, 
\small Sharif University of Technology,\\
$^b$\small Department of Computer Engineering,
\small Sharif University of Technology,\\
$^c$\small School of Mathematics, Institute for Research in
Fundamental Sciences(IPM),
\small 9395-5746,\\
\small Tehran, Iran.}
\begin{document}
\maketitle

\begin{abstract}
Let $G$ be a graph of order $n$. It is well-known that 
$\alpha(G)\geq \sum_{i=1}^n \frac{1}{1+d_i}$, where $\alpha(G)$ is the 
independence number of $G$ and $d_1,\ldots,d_n$ is the degree sequence of $G$.
We extend this result to digraphs by showing that if $D$ is a digraph with $n$
vertices, then 
\\$ \alpha(D)\geq \sum_{i=1}^n \left( \frac{1}{1+d_i^+} + \frac{1}{1+d_i^-}
  - \frac{1}{1+d_i}\right)$, where $\alpha(D)$ is the maximum size of an 
  acyclic vertex set of $D$.
Golowich proved that for any digraph $D$, 
$\chi(D)\leq \lceil \frac{5k}{2} \rceil+2$, 
where $k=max(\Delta^+(D),\Delta^-(D))$. We give a short and simple proof for this result.
Next, we investigate the chromatic number of tournaments
and determine the unique tournament such that for every integer $k>1$,
the number of proper $k$-colorings of that tournament is maximum among
all strongly connected tournaments with the same number of vertices. Also,
we find the chromatic polynomial of the strongly connected tournament with 
the minimum number of cycles.

\vskip 3mm

\noindent{\bf Keywords:} Digraph, Chromatic Number, Dichromatic Polynomial,
 Independence Number.

\vskip 3mm

\noindent{\bf 2010 Mathematics Subject Classification:} 05C15, 05C20.

\end{abstract}
\section{Introduction}
Throughout this paper all graphs and digraphs are loopless.
The vertex set and the edge set of a digraph $D$ are denoted by $V(D)$ and $E(D)$
respectively. This digraph is called \emph{strict} if for any two 
distinct vertices $u$ and $v$ of $D$, there is at most one edge from $u$ to $v$.
All digraphs in this paper are assumed to be strict. A digraph is called
\emph{digon-free} if it has no directed cycle of length $2$. 
An {\it acyclic} vertex set of a digraph $D$ is a 
subset $S$ of $V(D)$ such that the induced subdigraph of $D$ on $S$ has no 
directed cycle. The maximum size of an acyclic vertex set of $D$ is called
the {\it independence number} of $D$ and is denoted by $\alpha(D)$.
A {\it proper $k$-coloring} of $D$ is a partition of the vertex set of $D$
into at most $k$ acyclic subsets. The minimum integer $k$ for which $D$ has a 
proper $k$-coloring is called the {\it chromatic number} of $D$ and is denoted by
$\chi(D)$. This definition for the coloring of digraphs was first appeared 
in \cite{NEU}. The independence and chromatic number of the undirected graph
$G$ is also denoted by $\alpha(G)$ and $\chi(G)$, respectively.
It turns out that the number of proper $k$-colorings of $D$ is a
polynomial in $k$ \cite{HARUT} which is called the {\it chromatic polynomial} of 
$D$ and is denoted by $P(D;k)$.
In what follows, $N^+(v)$ and $N^-(v)$ denote the set of out-neighbors and the 
set of in-neighbors
of $v$ and their cardinalities are called the {\it in-degree} 
and the {\it out-degree} of 
$v$, respectively.
The {\it girth} of a digraph is defined as the length of its shortest
directed cycle.

Section $2$ is about the independence number of digraphs and some lower bounds
on that number. In particular, we find a lower bound for the independence 
number of digraphs which is similar to a well-known
bound for graphs proved in \cite{CARO1} and \cite{WEI}. Moreover, we find
another lower bound for the independence number of a digraph, 
in terms of its girth and number of induced cycles.

In Section $3$ we investigate various upper bounds on the chromatic number
of digraphs. Note that a trivial upper bound for the chromatic number
of a digraph with underlying graph $G$ is $\chi(G)$. We show that
the equality holds for some digraphs.
In \cite{GOLO} Golowich proved that for any digraph $D$, 
$\chi(D)\leq \lceil \frac{5k}{2} \rceil+2$, 
where $k=max(\Delta^+(D),\Delta^-(D))$.
Also in \cite{CHEN}, an upper bound for the chromatic number of digraphs 
with some forbidden cycle lengths is given. Here we give short and simple
proofs for these two results.
Moreover, we give a new upper bound for the chromatic number in terms of 
the girth.

The dichromatic polynomial of digraphs is the subject of Section $4$.
We give an interpretation for some coefficients of this polynomial, and
determine the tournament which has the maximum number of proper $k$-colorings
for every $k$ between all strongly connected tournaments with $n$ vertices.
\section{Lower Bounds on the Independence Number}

Caro \cite{CARO1} and Wei \cite{WEI} found the following lower bound
for the independence number of a graph.

\begin{thm}{\label{INDMAIN}}
 Let $G$ be a graph and $d_1,\ldots,d_n$ be the degree sequence
 of $G$. Then the following inequality holds:
  \begin{displaymath}
  \alpha(G)\geq \sum_{i=1}^n \frac{1}{1+d_i}
 \end{displaymath}
\end{thm}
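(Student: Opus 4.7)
The plan is to prove Theorem~\ref{INDMAIN} via the classical probabilistic argument of Caro and Wei, based on a uniformly random vertex ordering. I would pick a permutation $\sigma$ of $V(G) = \{v_1, \ldots, v_n\}$ uniformly at random, and define
\[
S_\sigma = \{\, v \in V(G) : \sigma(v) < \sigma(u) \text{ for every } u \in N(v)\,\},
\]
i.e.\ the set of vertices that precede all of their neighbors under $\sigma$. The goal is then to show that $S_\sigma$ is always independent and that its expected size is exactly the right-hand side of the claimed inequality.

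The key steps, in order, would be the following. First, I would check that $S_\sigma$ is an independent set: if adjacent vertices $u, v$ both lay in $S_\sigma$, then by definition $\sigma(u) < \sigma(v)$ and $\sigma(v) < \sigma(u)$, a contradiction. Second, for each $v_i$ I would compute $\Pr[v_i \in S_\sigma]$; since the event depends only on the relative order, under $\sigma$, of the $1+d_i$ vertices in $\{v_i\} \cup N(v_i)$, and since $\sigma$ restricted to this set is uniform on its permutations, each of these $1+d_i$ vertices is equally likely to come first, giving
\[
\Pr[v_i \in S_\sigma] = \frac{1}{1+d_i}.
\]
Third, I would apply linearity of expectation:
\[
\mathbb{E}[|S_\sigma|] = \sum_{i=1}^{n} \Pr[v_i \in S_\sigma] = \sum_{i=1}^{n}\frac{1}{1+d_i}.
\]
Finally, since some outcome of $\sigma$ must achieve at least the average, there exists an ordering for which $|S_\sigma| \geq \sum_{i=1}^n 1/(1+d_i)$, and this $S_\sigma$ witnesses the required bound on $\alpha(G)$.

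There is no real obstacle here: the proof is a short application of the probabilistic method, and the only mildly delicate point is the symmetry step used to evaluate $\Pr[v_i \in S_\sigma]$, which reduces to the observation that conditioning on which $1+d_i$ labels are assigned to $\{v_i\} \cup N(v_i)$, the induced ranking is uniform. An equivalent presentation via a greedy algorithm processing the vertices in the order $\sigma$ (adding $v$ to the independent set whenever no earlier neighbor has been added) would yield the same bound; I would prefer the random-permutation formulation because it generalises cleanly to the digraph version that the paper proves next.
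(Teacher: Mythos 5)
Your proof is correct and is exactly the standard Caro--Wei random-permutation argument; the paper does not reprove this theorem itself (it cites Caro, Wei, and the probabilistic proof in Alon--Spencer), but its own proof of the digraph generalization in Theorem~\ref{indep1} uses precisely the same random-ordering technique, of which your argument is the specialization to undirected graphs.
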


A probabilistic proof of this result can be found in \cite{ALON}.
In \cite{CARO2} and \cite{THIELE}, this result
has been generalized to hypergraphs.

Here we prove an analogous inequality for the independence number of  a digraph.

\begin{thm}\label{indep1}
 Let $D$ be a digraph with the vertex set $\{v_1,\ldots,v_n\}$, 
 and let $d_i,d_i^+,d_i^-$, $i=1,\ldots,n$, be the degree,
 the out-degree and the in-degree of $v_i$, respectively. 
 Then the following inequality holds:
 \begin{displaymath}
  \alpha(D)\geq \sum_{i=1}^n \left( \frac{1}{1+d_i^+} + \frac{1}{1+d_i^-}
  - \frac{1}{1+d_i}\right)
 \end{displaymath}
\end{thm}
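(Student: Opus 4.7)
\medskip

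\noindent\textbf{Proof proposal.} The plan is to mimic the probabilistic proof of the Caro--Wei bound (Theorem~\ref{INDMAIN}), but with a modified selection rule tailored to acyclicity instead of independence.

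Pick a permutation $\pi$ of $V(D)=\{v_1,\dots,v_n\}$ uniformly at random, and for each vertex $v_i$ introduce the two events
\[
  A_i=\{v_i \text{ comes first in }\pi\text{ among }\{v_i\}\cup N^+(v_i)\},\qquad
  B_i=\{v_i \text{ comes first in }\pi\text{ among }\{v_i\}\cup N^-(v_i)\}.
\]
Define $S=\{v_i : A_i\cup B_i \text{ occurs}\}$. Two things must be shown: that $S$ is almost surely an acyclic vertex set, and that $\mathbb{E}[|S|]$ equals the right-hand side of the claimed inequality; the probabilistic method then gives an outcome with $|S|\ge \mathbb{E}[|S|]$, proving the theorem.

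For acyclicity, suppose toward contradiction that $S$ contains a directed cycle $C$, and let $v$ be the vertex of $C$ with the \emph{largest} $\pi$-value. Then $v$ has a predecessor $u$ and successor $w$ on $C$, both of which precede $v$ in $\pi$. But $w\in N^+(v)$ precedes $v$, so $A_v$ fails; and $u\in N^-(v)$ precedes $v$, so $B_v$ fails. Hence $v\notin S$, a contradiction. So the induced subdigraph on $S$ has no directed cycle.

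For the expectation, the key identity is inclusion--exclusion: $P(A_i\cup B_i)=P(A_i)+P(B_i)-P(A_i\cap B_i)$. Here $P(A_i)=1/(1+d_i^+)$ and $P(B_i)=1/(1+d_i^-)$ by symmetry of the uniform permutation, and $A_i\cap B_i$ is exactly the event that $v_i$ comes first in $\pi$ among $\{v_i\}\cup N^+(v_i)\cup N^-(v_i)$, which has probability $1/(1+d_i)$ once one recalls that $d_i=|N^+(v_i)\cup N^-(v_i)|$ is the number of distinct neighbors of $v_i$ in the underlying graph. Summing over $i$ by linearity of expectation gives
\[
  \mathbb{E}[|S|]=\sum_{i=1}^{n}\left(\frac{1}{1+d_i^+}+\frac{1}{1+d_i^-}-\frac{1}{1+d_i}\right),
\]
so some realization of $\pi$ produces an acyclic vertex set of at least this size. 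The only subtlety I anticipate is the last step: one should confirm that the convention for $d_i$ really is $|N^+(v_i)\cup N^-(v_i)|$ (not $d_i^++d_i^-$, which would differ by the number of digons at $v_i$); this is what makes the inclusion--exclusion match the stated bound exactly, and is the one place where the argument actually uses the strict-digraph assumption.
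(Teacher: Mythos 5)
Your proof is correct and follows essentially the same route as the paper's: a uniformly random permutation, the events that $v_i$ precedes all its out-neighbors or all its in-neighbors, inclusion--exclusion plus linearity of expectation, and acyclicity via the last cycle vertex in the ordering. Your closing remark is also on point: the identity $\Pr(A_i\cap B_i)=1/(1+d_i)$ (and hence the theorem itself, as a digon shows) requires reading $d_i$ as $|N^+(v_i)\cup N^-(v_i)|$, a convention the paper leaves implicit.
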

\begin{proof}
 Choose $\sigma \in S_n$ uniformly at random. For $i=1,\ldots,n$, let $A_i$
 be the event that for $j=1,\ldots,n$, if $v_{\sigma(j)}\in N^+(v_i)$, then
 $\sigma(j)>i$. In other words, if we arrange $v_{\sigma(1)},\ldots,
 v_{\sigma(n)}$ on a line from left to the right, then $A_i$ is the event 
 that all of the out-neighbors of $v_i$ lie on the right of $v_i$. 
 Similarly, let $B_i$ be the event
 that for $j=1,\ldots,n$, if $v_{\sigma(j)}\in N^-(v_i)$, then
 $\sigma(j)>i$. It is not hard to see that:
 $$\Pr(A_i)=\frac{1}{1+d_i^+},\,\,\,\Pr(B_i)=\frac{1}{1+d_i^-},$$
 $$\Pr(A_i\cap B_i)=\frac{1}{1+d_i},$$
 and thus, by the principle of inclusion and exclusion:
 $$\Pr(A_i\cup B_i)=\frac{1}{1+d_i^+} + \frac{1}{1+d_i^-}
  - \frac{1}{1+d_i}.$$
  Now, let $X_i$ be the indicator random variable of $A_i\cup B_i$ (that is,
  $X_i=1$ if $A_i\cup B_i$ occurs and $X_i=0$ otherwise). If $S_\sigma$ is 
  the set of
  all $1\leq i\leq n$ such that $A_i\cup B_i$ occurs, then 
  $|S_\sigma|=\sum_{i=1}^nX_i$. So we have the following,
  
  $$\mathbb{E}(|S_\sigma|)=\sum_{i=1}^n\mathbb{E}(X_i)
  =\sum_{i=1}^n\Pr(A_i\cup B_i),$$
  
  which implies that,
  
  $$\mathbb{E}(|S_{\sigma}|)=\sum_{i=1}^n \left( \frac{1}{1+d_i^+} + \frac{1}{1+d_i^-}
  - \frac{1}{1+d_i}\right).$$
 
  Thus, there is a $\sigma \in S_n$ for which
  
  $$|S_\sigma|\geq\sum_{i=1}^n \left( \frac{1}{1+d_i^+} + \frac{1}{1+d_i^-}
  - \frac{1}{1+d_i}\right).$$
  
  Now, to complete the proof, it suffices to show that for 
  each $\sigma$, $\{v_i: i\in S_\sigma\}$ is an acyclic subset of 
  the vertices of $D$. 
  Assume on the contrary that there is a cycle $C$ between
  the vertices in $\{v_i: i\in S_\sigma\}$. Let $i$ be the largest 
  integer that $v_i$
  is a vertex of $C$ (that is, $v_i$ is the right-most vertex
  of $C$ in the random order of the vertices of $D$). Since $C$ is a cycle,
  there are two vertices $v_{\sigma(j)}$ and $v_{\sigma(k)}$ in $V(C)$
  such that $v_{\sigma(j)}\in N^+(v_i)$ and 
  $v_{\sigma(k)}\in N^-(v_i)$. Since $v_{\sigma(j)}\in N^+(v_i)$ and
  $\sigma(j)<i$, $A_i$ does not occur, and similarly, 
  since $v_{\sigma(k)}\in N^-(v_i)$
  and $\sigma(k)<i$, $B_i$ does not occur. But since $i\in S_\sigma$, at least
  one of the $A_i$ and $B_i$ should occur, which is a contradiction.
\end{proof}

Using this theorem, one can obtain some lower bounds for the independence
number of a digraph which is completely
determined by the underlying graph.

\begin{cor}\label{indep2}
 Let $D$ be a digraph with $n$ vertices and $m$ edges, and let
 $d_1,\ldots,d_n$ be the degrees of vertices of the underlying graph
 of $D$. Then
 \begin{itemize}
  \item[(i)] $\alpha(D)\geq{\displaystyle \sum_{i=1}^n
  \frac{3d_i+2}{(d_i+1)(d_i+2)}}$,
  \item[(ii)] If $D$ has no isolated vertices and $k=\frac{m}{n}$, then 
  $\alpha(D)\geq \frac{n}{\frac{2k}{3}+1}$
 \end{itemize}
\end{cor}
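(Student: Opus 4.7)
The plan is to derive both parts directly from Theorem~\ref{indep1} by bounding each of its summands from below.

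For part (i), I would fix a vertex $v_i$ and view the $i$-th summand of Theorem~\ref{indep1} as a function of $(a,b)=(d_i^+,d_i^-)$ subject to the constraint $a+b=d_i$. Combining the three fractions over the common denominator $(1+a)(1+b)(1+d_i)$ and using the constraint to rewrite $a+b$ as $d_i$, the summand collapses to
\[
\frac{(1+d_i)^2-ab}{(1+d_i)\bigl((1+d_i)+ab\bigr)},
\]
which is strictly decreasing in the product $p:=ab$. Hence its minimum over admissible pairs $(a,b)$ with $a+b=d_i$ occurs at the largest admissible value of $p$, namely $p=d_i^2/4$ (attained when $a=b=d_i/2$). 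Substituting and using the two factorizations $4(1+d_i)^2-d_i^2=(3d_i+2)(d_i+2)$ and $4(1+d_i)+d_i^2=(d_i+2)^2$, the expression simplifies neatly to $\frac{3d_i+2}{(d_i+1)(d_i+2)}$. Summing over $i$ gives (i).

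For part (ii), the plan is to combine (i) with Jensen's inequality after first replacing the complicated summand by a cleaner convex quantity. The key pointwise bound is
\[
\frac{3x+2}{(x+1)(x+2)}\ \ge\ \frac{3}{x+3}\qquad(x\ge 0),
\]
which after cross-multiplying reduces to $2x\ge 0$. Combining with (i) yields $\alpha(D)\ge\sum_{i=1}^{n}\frac{3}{d_i+3}$. Since $h(x)=\frac{3}{x+3}$ is convex on $[0,\infty)$, Jensen's inequality gives $\sum_i h(d_i)\ge n\,h\!\bigl(\tfrac{1}{n}\sum_i d_i\bigr)=n\,h(2k)=\frac{3n}{2k+3}=\frac{n}{(2k/3)+1}$, where I have used $\sum_i d_i=2m=2kn$.

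The only step carrying any real weight is the minimization in (i): recognizing that the $i$-th summand is strictly decreasing in the product $d_i^+ d_i^-$ and verifying the two tidy factorizations that produce the claimed value at the symmetric minimizer $d_i^+=d_i^-=d_i/2$. Everything else — the pointwise comparison with $\frac{3}{x+3}$ and the convexity of that function — is one-line calculus, so once the algebraic identity for (i) is in hand the corollary follows routinely.
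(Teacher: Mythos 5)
Your proof is correct, and it follows the same overall strategy as the paper: bound each summand of Theorem~\ref{indep1} from below by $\frac{3d_i+2}{(d_i+1)(d_i+2)}$ (the extremal case $d_i^+=d_i^-=d_i/2$), then get (ii) by convexity. The differences are in execution. For (i), the paper simply applies the AM--HM (convexity) inequality $\frac{1}{1+d_i^+}+\frac{1}{1+d_i^-}\geq\frac{4}{2+d_i}$ and subtracts $\frac{1}{1+d_i}$, whereas you rewrite the whole summand as $\frac{(1+d_i)^2-p}{(1+d_i)\left((1+d_i)+p\right)}$ with $p=d_i^+d_i^-$ and use monotonicity in $p$ together with $p\leq d_i^2/4$; this is a bit longer but is exactly the style the paper itself uses later in Corollary~\ref{indep3}, and your algebra checks out. (Both your step $d_i^++d_i^-=d_i$ and the paper's identical identification implicitly treat $d_i$ as $d_i^++d_i^-$, i.e.\ the digon-free situation, and likewise $\sum_i d_i=2m$; you share the paper's convention here, so no complaint.) For (ii), the paper applies Jensen to $f(x)=\frac{3x+2}{(x+1)(x+2)}$, which requires checking $f''\geq 0$ on $[1,\infty)$ and is the reason for the no-isolated-vertices hypothesis; you instead first pass to the pointwise bound $\frac{3x+2}{(x+1)(x+2)}\geq\frac{3}{x+3}$ (equivalent to $2x\geq 0$) and then apply Jensen to $h(x)=\frac{3}{x+3}$, which is convex on all of $[0,\infty)$. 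This ordering is slightly cleaner, avoids the second-derivative computation, and in fact makes the isolated-vertices hypothesis unnecessary for the Jensen step, a small bonus over the paper's argument.
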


\begin{proof}

(i) If $d_i^+$ and $d_i^-$ denote the out-degree and the in-degree
  of the $i$th vertex, then
    $$\frac{1}{1+d_i^+}+\frac{1}{1+d_i^-}\geq\frac{2}{1+\frac{d_i^++d_i^-}{2}}
    =\frac{4}{2+d_i}$$
    and applying Theorem \ref{INDMAIN} yields the first part.
(ii) For any real number $x\geq 1$ define: 
    $$f(x)=\frac{4}{x+2}-\frac{1}{x+1}=
    \frac{3x+2}{(x+1)(x+2)}.$$
    Since for $x\geq 1$
    $$f''(x)=\frac{8}{(x+2)^3}-\frac{2}{(x+1)^3}\geq0,$$
    $f$ is convex and thus,
    $$\sum_{i=1}^n\frac{3d_i+2}{(d_i+1)(d_i+2)}=\sum_{i=1}^nf(d_i)
    \geq nf\left(\frac{\sum_{i=1}^nd_i}{n}\right)=nf(\frac{2m}{n}).$$
    This inequality together with the previous part, gives the following,
    $$\alpha(D)\geq n\frac{3k+1}{(k+1)(2k+1)} \geq \frac{n}{\frac{2k}{3}+1}$$
    which completes the proof.
\end{proof}

In \cite{HARUT}, it is conjectured that for a digon-free digraph $D$, 
$\chi(D) \leq \lceil \frac{\tilde{\Delta}}{2} \rceil +1$, where 
$\tilde{\Delta}=max\{\sqrt{d^+(v)d^-(v)} | v \in V(D)\}$. If this conjecture is
true, then $\alpha(D)\geq \frac{n}{\lceil \frac{\tilde{\Delta}}{2} \rceil +1}$.
In the next corollary, a weaker result about the independence number is proved.

\begin{cor}\label{indep3}
 Let $D$ be a digon-free digraph. Then
 $\alpha(D) \geq \frac{n}{\frac{2\tilde{\Delta}}{3} +1}$.
\end{cor}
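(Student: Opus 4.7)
The plan is to apply Theorem~\ref{indep1} and then bound each summand pointwise. The digon-free hypothesis enters only through the observation that $N^+(v_i)\cap N^-(v_i)=\emptyset$ for every vertex, which gives $d_i=d_i^++d_i^-$; combined with Theorem~\ref{indep1} this yields
$$\alpha(D)\;\geq\;\sum_{i=1}^{n}\left(\frac{1}{1+d_i^+}+\frac{1}{1+d_i^-}-\frac{1}{1+d_i^++d_i^-}\right).$$

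The heart of the argument will be the pointwise inequality
$$\frac{1}{1+a}+\frac{1}{1+b}-\frac{1}{1+a+b}\;\geq\;\frac{1}{1+\frac{2\sqrt{ab}}{3}}\qquad(a,b\geq 0).$$
To prove this, I would put the left-hand side over a common denominator and substitute $u=1+a+b$ and $q=\sqrt{ab}$; using $(1+a)(1+b)=1+a+b+ab=u+q^2$ and $(1+a+b)(2+a+b)-(1+a)(1+b)=u^2-q^2$, the left-hand side collapses to $\frac{u^2-q^2}{u(u+q^2)}$. Cross-multiplying against $\frac{3}{3+2q}$ then reduces the claim to the elementary inequality
$$(2u+q)(u-2q)\;\geq\;3q.$$
The AM-GM inequality $a+b\geq 2\sqrt{ab}$ gives $u-2q\geq 1$, so $(2u+q)(u-2q)\geq 2u+q\geq 2+5q\geq 3q$, and the pointwise bound follows.

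Finally, applying this bound with $(a,b)=(d_i^+,d_i^-)$ for each $i$ and using $\sqrt{d_i^+d_i^-}\leq\tilde{\Delta}$ yields
$$\alpha(D)\;\geq\;\sum_{i=1}^{n}\frac{1}{1+\frac{2\sqrt{d_i^+d_i^-}}{3}}\;\geq\;\frac{n}{1+\frac{2\tilde{\Delta}}{3}},$$
as required. The main obstacle is the pointwise inequality: the algebra looks forbidding before the substitution but collapses cleanly once one introduces $u$ and $q$, and that is precisely the step where the digon-free hypothesis re-enters, via $u\geq 1+2q$.
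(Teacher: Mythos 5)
Your proof is correct and essentially the paper's: both apply Theorem~\ref{indep1}, use digon-freeness to write $d_i=d_i^++d_i^-$, and bound each summand below by $\frac{1}{1+2\tilde{\Delta}/3}$ via $d_i^++d_i^-\geq 2\sqrt{d_i^+d_i^-}$ and $\sqrt{d_i^+d_i^-}\leq\tilde{\Delta}$, the only difference being that the paper verifies the pointwise estimate through two monotonicity observations (increasing in $d_i$ for fixed $p_i=d_i^+d_i^-$, then decreasing in $p_i$) where you use the explicit factorization $(2u+q)(u-2q)\geq 3q$, which is sound. One small correction to your closing remark: the step $u\geq 1+2q$ is plain AM--GM and is not where digon-freeness re-enters --- that hypothesis is used exactly once, at the identification $d_i=d_i^++d_i^-$.
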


\begin{proof}
 Let $V(D)=\{v_1,\ldots,v_n\}$ and $d^+_i=d^+(v_i)$, $d^-_i=d^-(v_i)$ and 
 $d_i=d^+_i+d^-_i$, $p_i=d^+_id^-_i$. We have the following,
 \begin{eqnarray*}
  \frac{1}{1+d^+_i}+\frac{1}{1+d^-_i}-\frac{1}{1+d_i}&=&
  \frac{2+d_i}{1+d_i+p_i}-\frac{1}{1+d_i}.
 \end{eqnarray*}
 
 It can be seen that the above expression is an increasing function of $d_i$
 (for a fixed $p_i$), thus, since $d_i\geq 2\sqrt{p_i}$,
 \begin{eqnarray*}
  \frac{1}{1+d^+_i}+\frac{1}{1+d^-_i}-\frac{1}{1+d_i}&\geq&
  \frac{2+2\sqrt{p_i}}{1+2\sqrt{p_i}+p_i}-\frac{1}{1+2\sqrt{p_i}}\\&=&
  \frac{1+3\sqrt{p_i}}{(1+\sqrt{p_i})(1+2\sqrt{p_i})}.
 \end{eqnarray*}
 
Now, the above expression is a decreasing function of $p_i$, and since
$\tilde{\Delta}=max\{\sqrt{p_i} | i=1,\ldots,n\}$, we find the following,
 \begin{eqnarray*}
  \frac{1}{1+d^+_i}+\frac{1}{1+d^-_i}-\frac{1}{1+d_i}&\geq&
  \frac{1+3\tilde{\Delta}}{(1+\tilde{\Delta})(1+2\tilde{\Delta})}\\&\geq&
  \frac{1}{ \frac{2\tilde{\Delta}}{3} +1}.
 \end{eqnarray*}
 
This inequality and the Corollary \ref{indep2} gives the result.
\end{proof}

In the next theorem we give a lower bound on the independence number of a
digraph in terms of its girth and the number of induced cycles. Note that
a subset of vertices of a digraph is acyclic if and only if it 
contains no induce cycles.
\begin{thm}
 Let $D$ be a digraph with $n$ vertices and girth $g$, and let $t$ be the 
 number of induced directed cycles of $D$. If $tg\geq n$ then
 $$\alpha(D)\geq\frac{g-1}{g}\left(\frac{n^g}{tg}\right)^{\frac{1}{g-1}}$$
\end{thm}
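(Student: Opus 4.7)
The plan is to apply the probabilistic method with alteration, in the spirit of the standard proof of the Moon--Moser / Spencer-type Ramsey bound. Include each vertex of $D$ independently with probability $p \in (0,1]$ to form a random set $S$, and let $X = |S|$ and $Y$ denote the number of induced directed cycles of $D$ whose vertex set lies entirely inside $S$. Every induced directed cycle of $D$ has length at least $g$, so its probability of survival is at most $p^g$. Linearity of expectation therefore gives
$$\mathbb{E}[X-Y] \;\geq\; pn - tp^g.$$

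Next I would argue that by deleting one vertex from each surviving induced cycle we obtain an acyclic set. This uses the observation quoted just before the theorem: a subset $T$ of $V(D)$ is acyclic if and only if it contains no induced directed cycle, and any induced directed cycle of $D[T]$ is automatically induced in $D$ itself. Hence after the deletions the remaining set is acyclic, and it has size at least $X - Y$. Consequently
$$\alpha(D) \;\geq\; pn - tp^g$$
for every $p \in (0,1]$.

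Finally I would optimise $p$. Differentiating the right-hand side gives the optimum $p^{g-1} = n/(tg)$, i.e.\ $p = (n/(tg))^{1/(g-1)}$. Substituting $tp^{g-1} = n/g$ makes the second term equal $pn/g$, so
$$pn - tp^g \;=\; \frac{g-1}{g}\,pn \;=\; \frac{g-1}{g}\left(\frac{n^g}{tg}\right)^{1/(g-1)},$$
which is exactly the claimed bound. The only thing that needs to be checked is that this choice of $p$ actually lies in $(0,1]$, and this is precisely where the hypothesis $tg \geq n$ is used: it forces $p \leq 1$, so the random-sampling step is legal.

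I expect the main obstacle to be essentially bookkeeping rather than conceptual: making sure the deletion step really produces an acyclic set (handled by the induced-cycle characterisation) and keeping the optimisation clean so that the algebra collapses to $(n^g/(tg))^{1/(g-1)}$. No steps appear to require a new idea beyond random sampling plus alteration.
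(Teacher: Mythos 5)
Your proposal is correct and follows essentially the same argument as the paper: random sampling with probability $p$, deletion of one vertex from each surviving induced cycle (using that induced cycles of the induced subdigraph are induced cycles of $D$), and choosing $p=\left(\frac{n}{tg}\right)^{\frac{1}{g-1}}$, with $tg\geq n$ guaranteeing $p\leq 1$. No substantive differences from the paper's proof.
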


\begin{proof}
 Let $C_1,\ldots,C_t$ be all of the induced cycles in $D$.
 Let $p$ be any number in $[0,1]$. Choose a random subset $S$ of $V(D)$,
 such that for each $v\in V(D)$, $\Pr(v\in S)=p$. 
 Let $Y$ be the number of induced cycles in $D[S]$.
 For each $i=1,\ldots,t$, the probability that $V(C_i)\subseteq S$ is
 $p^{|V(C_i)|}$, so
 $$E(Y)=\sum_{i=1}^tp^{|V(C_i)|}\leq tp^g.$$
 Thus $E(|S|-Y|)\geq np-tp^g$. So there is a subset $S$ of vertices such that
 after removing one vertex of each induced cycle in $D[S]$, at least
 $np-tp^g$ vertices remain. But the remaining vertices constitute an acyclic
 set and so we have $\alpha(D)\geq np-tp^g$, for each $p\in[0,1]$.
 Since $tg\geq n$ we can set $p=\left(\frac{n}{tg}\right)^{\frac{1}{g-1}}$
 which gives the desired result.
\end{proof}

\begin{cor}
 If $T$ is a tournament with $n$ vertices and $t$ directed triangles
 such that $n\geq 3t$, then
 $\alpha(T)\geq \displaystyle \frac{2}{3}n\sqrt{\frac{n}{3t}}$
\end{cor}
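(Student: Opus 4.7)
The plan is to derive this as an immediate specialization of the preceding theorem to $g = 3$, once I match the parameters correctly.

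Two quick observations about a tournament $T$ are needed. First, if $T$ has at least one directed triangle then its girth is $3$: strict digraphs have no loops or digons, so a directed $3$-cycle is already a shortest possible directed cycle. Second, in a tournament the number $t$ of directed triangles equals the number of induced directed cycles. Indeed, any three vertices of $T$ induce a subtournament with three edges, which is either a transitive triangle or a cyclic triangle, so every directed triangle is automatically induced. Conversely, any $k \geq 4$ vertices induce a subtournament with $\binom{k}{2} > k$ edges, so it cannot be a cycle. Hence the induced directed cycles of $T$ are exactly its directed triangles, and the quantity $t$ in the previous theorem and the quantity $t$ in the corollary coincide.

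With these identifications, one just substitutes $g = 3$ into the bound of the previous theorem:
$$\frac{g-1}{g}\left(\frac{n^g}{tg}\right)^{1/(g-1)}\bigg|_{g=3} \;=\; \frac{2}{3}\sqrt{\frac{n^3}{3t}} \;=\; \frac{2}{3}n\sqrt{\frac{n}{3t}},$$
which is exactly the claimed lower bound. The theorem's hypothesis $tg \geq n$ reduces to $3t \geq n$ for $g=3$ (this is what guarantees that the probability $p = (n/(tg))^{1/(g-1)}$ used in that proof lies in $[0,1]$); the condition written as ``$n \geq 3t$'' in the corollary should be read this way as well.

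There is no substantive obstacle. The whole content of the corollary is the combinatorial remark that in a tournament ``directed triangle'' and ``induced directed cycle'' are synonymous; after that, the result is a one-line substitution into the previous theorem.
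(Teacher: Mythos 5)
Your proof is correct and follows exactly the paper's route: the paper's own proof is the one-line remark that in a tournament the induced directed cycles are precisely the directed triangles, after which the bound is the $g=3$ specialization of the preceding theorem. Your further observation that the hypothesis must be read as $3t\geq n$ (matching the theorem's condition $tg\geq n$, which is what makes $p=\left(\frac{n}{tg}\right)^{1/(g-1)}\leq 1$) is right; the inequality ``$n\geq 3t$'' as printed in the corollary is evidently a typo, since with, say, a single directed triangle in a large tournament the stated bound would exceed $n$.
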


\begin{proof}
 In a tournament, induced directed cycles are directed triangle and the result
 follows from the previous theorem.
\end{proof}

\section{Bounds on the Chromatic Number}
There are some known bounds for the chromatic number of digraphs, see 
\cite{HARUT,HARUT2,HARUT3,MOHAR,GOLO,CHEN}. Here we show new bounds for 
the chromatic number and give simpler proofs for some known results. 

It is obvious that if $D$ is any orientation of the graph $G$ with
chromatic number $k$, then $\chi(D)\leq k$. We show that this bound 
is tight for every $k$.
\begin{lem}\label{AWESOME}
 Let $n\geq 10$ and $t\geq 3\log_2n$ be two positive integers. 
 Then there exists an orientation of $K_{n,n}$ such that each 
 of its $K_{t,t}$-subdigraph has a
 directed cycle.
\end{lem}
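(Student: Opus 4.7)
The natural approach is the probabilistic method: orient each edge of $K_{n,n}$ independently and uniformly at random, each of the two possible directions with probability $\frac{1}{2}$. Fix a pair $(A',B')$ of $t$-element subsets, one from each part of $K_{n,n}$; the induced subdigraph on $A'\cup B'$ is then a uniformly random orientation of $K_{t,t}$, and the goal is to show the probability that it is acyclic is small enough for a union bound.

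The key estimate is an upper bound on the number of acyclic orientations of $K_{t,t}$. Every acyclic orientation of a graph is induced by at least one linear ordering of its vertex set: take any linear extension of the partial order given by reachability in the orientation, and note that an edge $uv$ oriented $u\to v$ forces $u$ to precede $v$ in that extension. Consequently the number of acyclic orientations of $K_{t,t}$ is at most $(2t)!$. Since there are $2^{t^2}$ orientations in total, $\Pr[K[A',B']\text{ is acyclic}]\leq (2t)!/2^{t^2}$. A union bound over the $\binom{n}{t}^2$ choices of $(A',B')$ then yields
\[
\Pr\bigl[\text{some }K_{t,t}\text{-subdigraph is acyclic}\bigr]\leq \binom{n}{t}^{2}\cdot\frac{(2t)!}{2^{t^2}}.
\]

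The main remaining step is to verify that this upper bound is strictly less than $1$ under the hypotheses $n\geq 10$ and $t\geq 3\log_2 n$. Using $\binom{n}{t}\leq n^{t}/t!$ together with the identity $(2t)!=\binom{2t}{t}(t!)^{2}$ and the standard bound $\binom{2t}{t}\leq 4^{t}$, the right-hand side is at most $(4n^{2})^{t}/2^{t^{2}}$, which is less than $1$ precisely when $t>2+2\log_2 n$. Since $\log_2 n>2$ for every $n\geq 10$, the hypothesis $t\geq 3\log_2 n$ gives $t\geq 3\log_2 n>2+2\log_2 n$, and the probabilistic method produces the required orientation. The only delicate point is this numerical inequality: the bound $(2t)!$ on acyclic orientations of $K_{t,t}$ is far from tight, but it is already comfortably strong enough for the threshold $3\log_2 n$; a substantially smaller constant in place of $3$ would require a sharper enumeration of acyclic orientations of $K_{t,t}$ or a more careful probabilistic argument.
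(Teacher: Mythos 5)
Your proof is correct and follows essentially the same route as the paper: a uniformly random orientation, the estimate $\Pr[\text{acyclic}]\leq (2t)!/2^{t^2}$ obtained by associating each acyclic orientation with a linear ordering of the $2t$ vertices, and a union bound over the $\binom{n}{t}^2$ choices of parts. The only (harmless) difference is the final arithmetic, where your bound $\binom{2t}{t}\leq 4^t$ yields the slightly cleaner threshold $t>2+2\log_2 n$ in place of the paper's estimate via $\binom{2t}{t}\leq t^t$.
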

 
 \begin{proof}
  Let $X,Y$ be two parts of $K_{n,n}$. 
  Let $D$ be a random orientation of $G$. For each $I\subseteq X$ and
  $J\subseteq Y$ with $|I|=|J|=t$, let $A_{I,J}$ be the event that the induced
  subdigraph of $D$ on $I\cup J$ is acyclic. Since an acyclic orientation
  of a digraph gives an ordering of its vertices, we find that
  $$\Pr(A_{I,J})\leq \frac{(2t)!}{2^{t^2}}.$$
  So
  $$\Pr(\bigcup_{I,J}A_{I,J})\leq 
  \binom{n}{t}^2 \frac{(2t)!}{2^{t^2}}<
  \frac{n^{2t}(2t)!}{2^{t^2}(t!)^2}\leq
  \frac{n^{2t}t^t}{2^{t^2}}.$$
  On the other hand, by the assumption
  $$ \frac{n^{2t}t^t}{2^{t^2}}\leq
  \frac{2^{2t^2/3}2^{tlog_2t}}{2^{t^2}}<1.$$
  Thus there is an orientation of $G$ such that no $A_{I,J}$ occurs,
  and the proof is complete.
 \end{proof}
\begin{thm}
 For any positive integer $k$, there exists a graph $G$ with an orientation
 $D$ such that $\chi(D)=\chi(G)=k$.
\end{thm}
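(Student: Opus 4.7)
The plan is to reduce the problem to Lemma \ref{AWESOME} via a complete multipartite construction. The upper bound $\chi(D) \leq \chi(G) = k$ is trivial, since any proper $k$-coloring of $G$ partitions $V(G)$ into independent, hence acyclic, sets; so it suffices to produce $G$ with $\chi(G) = k$ together with an orientation $D$ satisfying $\chi(D) \geq k$. The cases $k=1, 2$ are handled directly by a single vertex and by an oriented $K_{n,n}$ from the lemma (which itself contains a directed cycle).

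For $k \geq 3$, I would take $G = K_{n,n,\ldots,n}$, the balanced complete $k$-partite graph with parts $X_1, \ldots, X_k$ of common size $n$, so $\chi(G) = k$. Set $t = \lceil 3 \log_2 n \rceil$ and choose $n$ large enough (and $\geq 10$) that $(k-1)t < n$, which is possible because $\log_2 n = o(n)$. For each pair $\{i,j\}$, I would apply Lemma \ref{AWESOME} to the bipartite graph between $X_i$ and $X_j$ to orient it so that every $K_{t,t}$-subdigraph contains a directed cycle; let $D$ denote the resulting orientation of $G$.

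The lower bound $\chi(D) \geq k$ then follows from the observation that any acyclic set $S \subseteq V(D)$ has $|S \cap X_i| \geq t$ for at most one index $i$: otherwise $S$ restricted to two such parts would contain an acyclic $K_{t,t}$-subdigraph, contradicting the lemma. Given a hypothetical partition $V(D) = S_1 \cup \cdots \cup S_{k-1}$ into $k-1$ acyclic sets, the assignment sending each $j$ to its unique ``large'' part (when one exists) is a partial function into $\{1, \ldots, k\}$ whose image has size at most $k-1 < k$. By pigeonhole some part $X_{i^*}$ is missed entirely, so $n = \sum_{j=1}^{k-1} |S_j \cap X_{i^*}| < (k-1)t$, contradicting the choice of $n$.

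I do not anticipate a substantive obstacle: the technical content lives entirely in Lemma \ref{AWESOME}, and the remaining argument reduces to a clean pigeonhole observation together with the elementary estimate guaranteeing $(k-1)\lceil 3 \log_2 n \rceil < n$ for suitably large $n$.
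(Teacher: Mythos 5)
Your proof is correct and follows essentially the same route as the paper: the balanced complete $k$-partite graph with pairwise orientations supplied by Lemma \ref{AWESOME}, parameters chosen so that $n>(k-1)t$, and a pigeonhole argument showing a $(k-1)$-coloring forces two large monochromatic subsets in two parts, hence a directed cycle. The only cosmetic difference is that you phrase the pigeonhole via ``each acyclic color class meets at most one part in $\geq t$ vertices,'' while the paper tracks the most popular color in each part; the two are interchangeable.
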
 

\begin{proof}
Let $n,t$ be two positive integers that satisfy $n>(k-1)t$ and
$t\geq 3\log_2n$.
 Let $G$ be the complete $k$-partite graph with parts
 $X_1,\ldots,X_k$ such that the size of
 each part is $n$. For each $i\neq j$, by
 Lemma \ref{AWESOME}, we can orient the edges between $X_i$
 and $X_j$, in such a way that each $K_{t,t}$-subdigraph of 
 $G[X_i\cup X_j]$ has a directed cycle. Denote the resulting orientation
 of $G$ by $D$. 
 We claim that $\chi(D)=k$. 
 Assume on the contrary
 that $D$ has a proper $(k-1)$-coloring. Since  $n>(k-1)t$, 
 in each part there are at least $t$ vertices that receive the same
 color. For each $i=1\ldots,k$, let $c_i$ be the color that is appeared on 
 maximum number of vertices of $X_i$. Since there are $k$ colors, there
 are $i\neq j$ such that $c_i=c_j$. But there are $A\subseteq X_i$
 and $B\subseteq X_j$ such that $|A|,|B|\geq t$ and all vertices of $A$ 
 have color $c_i$ and all vertices of $B$ have color $c_j$. By our
 choice of orientation, $D[A\cup B]$ has a directed cycle, a contradiction.
\end{proof}

Now, we present another upper bound for the chromatic number of a digraph. 
In \cite{HARUT}, it is proved that for any digraph $D$,
$\chi(D)\leq min\{\Delta^+(D),\Delta^-(D)\} +1$. Also they have
proposed the following conjecture.
\begin{conj}
 For every $k$-regular digraph $D$, 
we have $\chi(D) \leq \lceil \frac{k}{2} \rceil + 1$.
\end{conj}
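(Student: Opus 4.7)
The plan is to attempt the conjecture by producing a proper $(\lceil k/2\rceil+1)$-coloring directly via a probabilistic partition combined with a structural repair step, working under the implicit assumption that $D$ is digon-free (otherwise the symmetric orientation of $K_{k+1}$ already refutes the bound). A naive greedy approach based on repeatedly removing a maximum acyclic set is doomed, since Theorem~\ref{indep1} only guarantees $\alpha(D)\gtrsim 3n/(2k)$ for a $k$-regular digraph, which yields at best $\chi(D)\lesssim 2k/3$.

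First I would take a uniformly random partition of $V(D)$ into $t=\lceil k/2\rceil+1$ classes. Since $D$ is $k$-regular, for any vertex $v$ and its own class, the expected monochromatic in-degree and out-degree of $v$ are each $k/t\approx 2$; a Chernoff bound combined with the Lov\'asz Local Lemma would aim to produce a partition in which every vertex has monochromatic in- and out-degree bounded by some absolute constant $c$. Second, starting from such a partition, I would perform local swaps: whenever a short monochromatic directed cycle exists inside some class, move one of its vertices to another class, chosen to minimize a carefully weighted potential counting short monochromatic cycles. The low monochromatic degrees from the first step should ensure that each swap creates only a controlled number of new cycles, so a compactness/termination argument could then promote the near-acyclic classes to genuinely acyclic ones.

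The main obstacle is that forbidding directed cycles of \emph{every} length is a global constraint, whereas the Lov\'asz Local Lemma is inherently local. A single swap can destroy a short cycle while creating much longer ones, so the potential must account for cycles of all lengths, and it is not obvious how to force such a potential to contract under the swap procedure. A natural workaround is to first eliminate all monochromatic cycles of length at most some threshold $L$ and then split each class into a bounded number of pieces to break the remaining long cycles, but this splitting costs extra colors, and calibrating $L$ against that cost tightly enough to preserve the target $\lceil k/2\rceil+1$ appears to require genuinely new extremal input. A complementary induction route—extracting an acyclic set whose removal drops both $\Delta^+(D)$ and $\Delta^-(D)$ by at least $2$—faces an analogous gap between existing Caro--Wei-type bounds and what the conjecture demands, which is why the statement has remained a conjecture and not a theorem.
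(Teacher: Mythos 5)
This statement is a \emph{conjecture} in the paper (quoted from Harutyunyan's thesis), not a theorem: the authors offer no proof of it, and the strongest result they establish in this direction is the Golowich-type bound $\chi(D)\le\lfloor\frac{4k}{5}\rfloor+2$, obtained by partitioning $V(D)$ into $\lceil\frac{2k+1}{5}\rceil$ parts maximizing the number of edges between parts and then $2$-coloring each part via Mohar's theorem. So there is no paper proof to compare against, and the only question is whether your proposal actually closes the conjecture. It does not, and you essentially concede this yourself. The two load-bearing steps are both missing. First, the Chernoff/Local Lemma step only delivers a partition into $\lceil k/2\rceil+1$ classes in which every vertex has monochromatic in- and out-degree at most some constant $c$; bounded degree inside a class is far from acyclicity, and the paper's own argument shows exactly what such a degree bound buys: a class of maximum total degree at most $4$ can be $2$-colored (Mohar), which is how one lands at $\frac{4k}{5}$-type bounds rather than $\frac{k}{2}$. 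If you try to finish your first step the same way, the extra factor of $2$ per class destroys the target number of colors, and with $t\approx k/2$ classes the within-class degrees are around $4$, for which acyclicity simply fails in general. Second, the repair phase --- the weighted potential over monochromatic cycles of \emph{all} lengths, the claim that swaps make it contract, and the ``compactness/termination argument'' promoting near-acyclic classes to acyclic ones --- is never defined, and no argument is given that it terminates or that it does not cascade; this is precisely the global obstruction you identify, and identifying an obstruction is not the same as overcoming it.

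Two smaller points. You are right that the statement as written needs a digon-free hypothesis (the bidirected $K_{k+1}$ has $\chi=k+1$), but your proposed argument never actually uses digon-freeness anywhere, which is a warning sign: any proof of the correct statement must exploit that hypothesis. And your dismissal of the greedy route via Theorem~\ref{indep1} is fair as far as it goes, but it again only explains why easy approaches stop short of $\lceil\frac{k}{2}\rceil+1$. In sum, the proposal is a research program with the two hard steps left open, not a proof; the conjecture remains open, exactly as the paper presents it.
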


In \cite{GOLO} they proved that If $D$ is a digraph and 
$k=max\{\Delta^+(D),\Delta^-(D)\}$, then $\chi(D) \leq [\frac{4k}{5}]+2$. 
We provide a shorter and easier proof for this theorem.

\begin{thm}
If $D$ is a digraph and $k=max\{\Delta^+(D),\Delta^-(D)\}$, then 
$\chi(D) \leq [\frac{4k}{5}]+2$.
\end{thm}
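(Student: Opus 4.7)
The strategy is to peel off one acyclic color class $S$ from $V(D)$ and then color $D-S$ using the Harutyunyan--Mohar bound $\chi(H)\le \min(\Delta^+(H),\Delta^-(H))+1$ quoted in the paragraph preceding the theorem. If $S$ is acyclic and chosen so that $\Delta^-(D-S)\le \lfloor 4k/5\rfloor$ (or symmetrically $\Delta^+(D-S)\le \lfloor 4k/5\rfloor$), then Harutyunyan--Mohar gives $\chi(D-S)\le \lfloor 4k/5\rfloor+1$, and treating $S$ as one extra color class produces a proper coloring with at most $\lfloor 4k/5\rfloor+2$ colors.

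\textbf{Constructing the absorbing set $S$.}
The clean target is to find an acyclic $S\subseteq V(D)$ such that every $v\in V(D)\setminus S$ has at least $\lceil k/5\rceil$ in-neighbours in $S$. I would build $S$ probabilistically: place each vertex in a tentative set $S_0$ independently with probability $p$ just above $1/5$, so that $\mathbb{E}[|N^-(v)\cap S_0|]\gtrsim k/5$ for every vertex with $d^-(v)$ close to $k$. A union bound combined with a Chernoff estimate then fixes deterministically an $S_0$ in which every $v\notin S_0$ has at least $\lceil k/5\rceil$ in-neighbours inside $S_0$. Since $S_0$ need not be acyclic, I would pass to a maximum acyclic subset $S\subseteq S_0$; Theorem~\ref{indep1} applied to $D[S_0]$ controls how many vertices must be deleted, and inflating $p$ slightly guarantees that the absorbing property survives the pruning.

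\textbf{Main obstacle.}
The delicate point is to secure simultaneously (i) acyclicity of $S$, (ii) uniform absorption for every outside vertex, and (iii) the specific constant $4/5$ rather than a weaker constant. Chernoff concentration weakens when $k$ is small, but in that regime $\lfloor 4k/5\rfloor+2$ coincides with or exceeds the trivial Harutyunyan--Mohar bound $k+1$, so a short case analysis should cover small $k$. A cleaner alternative, which I would try first, is a deterministic reduction that applies Theorem~\ref{indep1} to an auxiliary digraph encoding the in-neighbourhood structure of $D$, extracting the absorbing acyclic set without any probabilistic machinery; if such a reduction exists, it would explain the authors' claim that a ``short and simple'' proof is available.
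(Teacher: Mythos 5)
Your overall plan---peel off one acyclic class $S$ so that every remaining vertex loses a $\tfrac15$-fraction of its in-degree, then finish with the bound $\chi(H)\leq \min\{\Delta^+(H),\Delta^-(H)\}+1$---cannot work, because the absorbing set you need does not exist in general. Take any tournament $T$ of order $n$ whose largest transitive subtournament has $O(\log n)$ vertices (random tournaments have this property, with maximum size about $2\log_2 n$). In a tournament every acyclic vertex set is a transitive subtournament, so any admissible $S$ satisfies $|S|=O(\log n)$, and removing $S$ lowers each in-degree by at most $|S|$. Since $k=\max\{\Delta^+,\Delta^-\}\geq \frac{n-1}{2}$, we get $\Delta^-(T-S)\geq \frac{n-1}{2}-O(\log n) > \lfloor \frac{4k}{5}\rfloor$ for large $n$; the same holds for out-degrees. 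So no single acyclic class can absorb a constant fraction of the degrees, and the ``one class plus Harutyunyan--Mohar'' scheme cannot reach $\lfloor\frac{4k}{5}\rfloor+2$. The probabilistic construction has independent problems even where the target is achievable: the Chernoff-plus-union-bound step needs $k\gtrsim \log n$, and your fallback to the trivial bound $k+1$ only covers $k\leq 5$ (since $\lfloor\frac{4k}{5}\rfloor+2\geq k+1$ forces $\lceil\frac k5\rceil\leq 1$), leaving all moderate $k$ with large $n$ uncovered; moreover, pruning $S_0$ to an acyclic $S$ both ejects vertices that were never guaranteed any in-neighbours in $S$ and can delete precisely the in-neighbours of some outside vertex, since Theorem~\ref{indep1} controls only the global number of deleted vertices, not their location.

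The paper's proof avoids this obstruction by spreading the degree reduction over many parts rather than concentrating it in one acyclic class: partition $V(D)$ into $t=\lceil\frac{2k+1}{5}\rceil$ parts so as to maximize the number of edges between parts. If some vertex had at least $5$ neighbours (in- plus out-) inside its own part, then, as its total degree is at most $2k\leq 5t-1$, some other part contains at most $4$ of its neighbours and moving it there would increase the cut, a contradiction; hence inside each part every vertex has total degree at most $4$. By Theorem 2.3 of \cite{MOHAR} each part is $2$-colorable, giving $\chi(D)\leq 2t=2\lceil\frac{2k+1}{5}\rceil\leq\lfloor\frac{4k}{5}\rfloor+2$. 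If you want to salvage your idea, you would have to iterate the peeling $\Theta(k)$ times with classes that each shave off only $O(1)$ from the degrees---which is essentially what the max-cut partition accomplishes in one deterministic step.
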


\begin{proof}
Let $t=\lceil\frac{2k +1}{5}\rceil$. 
Partition the vertices of $D$ into $t$ parts such that the number of edges
between these parts is maximum possible.
Now, we claim that every vertex at most $4$ neighbors in its part. 
To see this, note that if the number of neighbors of a vertex in 
its part is at least $5$, 
then its degree in one of the other parts should be at most $4$.
So one may change the part of this vertex and make the number of 
edges between parts larger, which is impossible.  
Thus, by Theorem 2.3 of \cite{MOHAR}, we can color the vertices of each part with 
just $2$ colors. 
So we can color all vertices of $D$ with 
$2t=2\lceil\frac{2k +1}{5}\rceil \leq [\frac{4k}{5}]+2$ 
colors such that the subdigraph of each color is acyclic.
\end{proof}

\begin{thm}
Let $D$ be a digraph with girth $g$ and $n$ vertices. Then $\chi(D) \leq [\frac{n-1}{g-1}]+1$.
\end{thm}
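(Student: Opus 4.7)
The plan is to exploit the girth in the most direct way possible: since every directed cycle in $D$ has at least $g$ vertices, any subset $S\subseteq V(D)$ with $|S|\le g-1$ is automatically acyclic. Thus it suffices to partition $V(D)$ into acyclic classes by simply cutting the vertex set into blocks of size at most $g-1$ (ignoring the edge structure entirely). The minimum number of blocks of size at most $g-1$ needed to cover $n$ vertices is $\lceil n/(g-1)\rceil$, and each block yields an acyclic color class.

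To finish, I would verify the arithmetic identity $\lceil n/(g-1)\rceil = \lfloor (n-1)/(g-1)\rfloor+1$: writing $n-1 = q(g-1)+r$ with $0\le r\le g-2$, one has $n = q(g-1)+(r+1)$ with $1\le r+1\le g-1$, so the left side equals $q+1$ and the right side also equals $q+1$. This converts the bound into the form $\lfloor (n-1)/(g-1)\rfloor+1$ stated in the theorem.

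There is essentially no obstacle here; the entire argument is the observation that ``size $<g$ forces acyclicity.'' The only place one might trip is on the off-by-one in the ceiling/floor conversion, but that is a routine calculation. Note also that this bound is of a completely different flavor from the maximum-degree bounds proved earlier in this section: it uses no local degree information at all, only the global girth constraint, and is therefore most useful when $D$ has large girth relative to $n$ (for instance, when $g$ is close to $n$, the bound approaches the trivial $\chi(D)\le 2$).
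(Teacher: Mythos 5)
Your proof is correct, but it is a genuinely different (and simpler) argument than the paper's. The paper proves the bound greedily: it fixes an ordering $v_1,\dots,v_n$, colors the vertices one by one with $k=\lfloor\frac{n-1}{g-1}\rfloor+1$ colors, and argues that when a vertex is colored, the cycles through it that could become monochromatic are pairwise vertex-disjoint apart from that vertex, each using at least $g-1$ of the remaining $n-1$ vertices, so at most $\lfloor\frac{n-1}{g-1}\rfloor$ colors are forbidden and a free color always exists. You bypass the greedy/cycle-counting step entirely: since every directed cycle of $D$ has at least $g$ vertices, any block of at most $g-1$ vertices is automatically acyclic, so an arbitrary partition of $V(D)$ into $\lceil\frac{n}{g-1}\rceil$ such blocks is a proper coloring, and the identity $\lceil\frac{n}{g-1}\rceil=\lfloor\frac{n-1}{g-1}\rfloor+1$ (which you verify correctly) gives exactly the stated bound. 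What your route buys is robustness and transparency: it needs no care about the order of coloring, no argument that the ``dangerous'' cycles at a vertex are internally disjoint, and it produces explicit, nearly equal-sized color classes without looking at the edge set at all. What the paper's greedy viewpoint buys is a template that can be sharpened when more structure is available (e.g.\ replacing the crude count $\lfloor\frac{n-1}{g-1}\rfloor$ of disjoint cycles through a vertex by local information), whereas the block partition uses only the global parameters $n$ and $g$ and cannot be refined further.
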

\begin{proof}
Let $v_1,v_2,...,v_n$ be the vertices of $D$ and $k=[\frac{n-1}{g-1}]+1$. Color the vertices with $k$ colors from $v_1$ to $v_n$. For each vertex $v_i$, we have at most $[\frac{n-1}{g-1}]$ vertex-disjoint cycles containing $v_i$, so we can use the other color and we are sure that $v_i$ is not in a directed monochromatic cycle.
\end{proof}

In \cite{CHEN} the following result was proved. Here we give a short and simple
proof of this result.
If $T$ is a directed DFS tree of a digraph $D$, then a \emph{back edge}
of $D$ is an edge from a vertex to one of its ancestors.

\begin{thm}
 Let $k \geq 2$ be an integer. If a digraph $D$ contains no directed cycle
 of length $1$ modulo $k$, then $\chi(D) \leq k$.
\end{thm}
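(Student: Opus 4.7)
The plan is to exhibit an explicit partition of $V(D)$ into $k$ subsets, each of which induces an acyclic subdigraph; by the paper's definition of a proper $k$-coloring (a partition of the vertex set into at most $k$ acyclic subsets), such a partition witnesses $\chi(D) \le k$. The construction is based on a single depth-first search.

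First, I would run a DFS on $D$ from an arbitrary starting vertex, continuing from unvisited vertices to obtain a DFS forest $F$ on all of $V(D)$. For each $v \in V(D)$ let $d(v)$ denote its depth in $F$ (roots having depth $0$), and define the $k$-coloring $\varphi(v) = d(v) \bmod k$. This gives a partition $V(D) = V_0 \sqcup V_1 \sqcup \cdots \sqcup V_{k-1}$ with $V_c = \varphi^{-1}(c)$. What remains is to verify the key claim: for every color class $V_c$, the induced subdigraph $D[V_c]$ is acyclic.

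Suppose toward contradiction that some $D[V_c]$ contains a directed cycle $C$. Let $v \in V(C)$ have the smallest DFS discovery time among vertices of $C$. By the standard DFS reachability (white-path) lemma, every other vertex of $C$ is reached from $v$ before $v$ finishes, so every other vertex of $C$ is a descendant of $v$ in $F$; in particular, the predecessor $u$ of $v$ on $C$ is a descendant of $v$, and the arc $u \to v$ is therefore a back edge with respect to $F$. The unique directed tree-path in $F$ from $v$ down to $u$ has length $d(u) - d(v)$; concatenating it with the back arc $u \to v$ produces a directed cycle $C'$ in $D$ with $|C'| = d(u) - d(v) + 1$. Since $u, v \in V_c$ we have $d(u) \equiv d(v) \pmod{k}$, hence $|C'| \equiv 1 \pmod{k}$, contradicting the hypothesis that $D$ contains no directed cycle of length $\equiv 1 \pmod{k}$.

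Therefore each $D[V_c]$ is acyclic, so $\{V_0, \ldots, V_{k-1}\}$ is a partition of $V(D)$ into at most $k$ acyclic subsets, which by the paper's definition is a proper $k$-coloring of $D$; consequently $\chi(D) \le k$. The only nontrivial step is the DFS structural fact used to identify the arc entering the minimum-discovery vertex of $C$ as a back edge; once that is in hand, the contradiction reduces to the one-line identity $|C'| = d(u) - d(v) + 1 \equiv 1 \pmod{k}$, which accounts for the ``short and simple'' claim the authors make.
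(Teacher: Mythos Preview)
Your proof is correct and follows essentially the same approach as the paper: color by DFS depth modulo $k$, and observe that a back edge together with the tree path above it yields a directed cycle of length $\equiv 1 \pmod{k}$. The only cosmetic differences are that the paper first reduces to the strongly connected case and cites \cite[Lemma~22.11]{CLRS} for the existence of a back edge on any directed cycle, whereas you work directly with a DFS forest and extract the back edge yourself via the white-path lemma.
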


\begin{proof}
 Without loss of generality, we can assume that $D$ is strongly connected.
 Let $T$ be a directed  DFS tree of $D$ with root $v_0$. For each $v \in V(D)$,
 let $c(v)$ be the distance of $v$ from $v_0$ in $T$ modulo $k$. We claim that,
 $c$ is a proper $k$-coloring of $D$. Let $C$ be an arbitrary directed cycle of 
 $D$. By \cite[Lemma $22.11$]{CLRS}, $C$ contains a back-edge $xy$. Let $P$ be the unique directed path
 from $y$ to $x$ in $T$. If $c(x)=c(y)$, then the length of $P$ is divisible
 by $k$. So, $D$ contains a cycle of length $1$ modulo $k$, a contradiction.
 Thus, $\chi(D) \leq k$.
\end{proof}

\section{Dichromatic Polynomial}
In [5] the dichromatic polynomial of digraphs was defined. Now, 
we want to discuss about some properties of dichromatic polynomial
for digraphs and tournaments.

\begin{thm}
Let $g$ be the girth of digraph $D$. Then in $P(D;x)$ the coefficients of 
$x^{n-1},\ldots,x^{n-g+2}$ are zero and the coefficient of 
$x^{n-g+1}$ is the number of cycles of length $g$ with negative sign.
\end{thm}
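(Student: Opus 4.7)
The plan is to apply inclusion-exclusion over the set $\mathcal{C}$ of all directed cycles of $D$. A coloring $c:V(D)\to[k]$ fails to be proper precisely when some cycle $C\in\mathcal{C}$ is monochromatic, so if $A_C$ denotes the set of maps $V(D)\to[k]$ for which $V(C)$ is monochromatic, inclusion-exclusion yields
\[
P(D;k)=\sum_{S\subseteq\mathcal{C}}(-1)^{|S|}\,\Bigl|\bigcap_{C\in S}A_C\Bigr|=\sum_{S\subseteq\mathcal{C}}(-1)^{|S|}\,k^{c(S)},
\]
where $c(S)$ is the number of blocks in the partition of $V(D)$ generated by declaring $u\sim v$ whenever $u$ and $v$ lie on a common cycle of $S$. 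Setting $V(S)=\bigcup_{C\in S}V(C)$ and letting $c_0(S)$ denote the number of connected components of the undirected graph on $V(S)$ whose edges are the arcs of the cycles of $S$, one has $c(S)=n-|V(S)|+c_0(S)$.

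The core estimate is the inequality $|V(S)|-c_0(S)\ge g-1$ for every nonempty $S$. Each connected component of the cycle-graph contains at least one cycle from $S$ and hence has at least $g$ vertices; since a component on $v$ vertices contributes $v-1$ to $|V(S)|-c_0(S)$, summing over the $c_0(S)\ge 1$ components gives the bound. Consequently only $S=\emptyset$ can produce a term $k^{n-j}$ with $0\le j\le g-2$, and this contributes $k^n$, so the coefficients of $x^{n-1},\ldots,x^{n-g+2}$ all vanish.

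For the coefficient of $x^{n-g+1}$, equality $|V(S)|-c_0(S)=g-1$ forces $c_0(S)=1$ and the sole nontrivial component to have exactly $g$ vertices; thus $S$ consists of directed cycles of length $\ge g$ lying on a common $g$-vertex set $W$, so each is a Hamilton cycle of $D[W]$. The decisive lemma is that $D[W]$ supports at most one such Hamilton cycle: if $C_1=v_1\to v_2\to\cdots\to v_g\to v_1$ and $C_2\ne C_1$ were two of them, then picking an arc $v_i\to v_j$ of $C_2$ with $j\not\equiv i+1\pmod g$ and concatenating it with the $C_1$-path from $v_j$ to $v_i$ produces a directed cycle in $D$ of length $1+((i-j)\bmod g)\le g-1$, contradicting that the girth of $D$ is $g$.

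Once the uniqueness step is secured, each $g$-vertex set that carries a $g$-cycle contributes exactly the singleton $S=\{C\}$ with sign $-1$, while other sets contribute nothing, so the coefficient of $x^{n-g+1}$ totals to the number of directed $g$-cycles of $D$ taken with a minus sign. The only genuine obstacle in the plan is the uniqueness of the Hamilton cycle on $W$; everything else is direct inclusion-exclusion bookkeeping.
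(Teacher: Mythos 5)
Your proposal is correct and takes essentially the same route as the paper: inclusion--exclusion over the events ``cycle $C$ is monochromatic,'' with the observation that a nonempty family of cycles forces at least $g-1$ identifications of colors, so nonempty terms contribute only $x^{m}$ with $m\le n-g+1$. In fact your write-up is more careful than the paper's sketch (which works with minimal cycles and states the conclusion without detail): your uniqueness lemma --- that a $g$-vertex set cannot carry two distinct directed Hamilton cycles without creating a cycle shorter than $g$ --- is exactly the point needed to see that no multi-cycle term and no longer cycle lands on $x^{n-g+1}$, and the paper leaves it implicit.
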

\begin{proof}
Let $c_1,\ldots,c_k$ be the minimal cycles of $D$. 
By the Inclusion-exclusion principle, the number of ways for 
coloring the vertices of $D$ with $x$ colors without a monochromatic cycle, 
is equal to $x^n$ minus the number of way with at least one monochromatic cycle, 
plus the number of way with at least two monochromatic cycles, etc. 
and except $x^n$ each other term is of the form $x^k$ with $k\leq n-g+1$. 
Furthermore there is exactly $g$ number of $x^{n-g+1}$ with negative sign. 
So the proof is complete.
\end{proof}
 \begin{cor}
 Let $T$ be a tournament with $n$ vertices. Then in $P(T;x)$ the coefficient of $x^{n-1}$  is zero, and the coefficient of $x^{n-2}$ is the number of directed triangles of $T$ with negative sign.
 \end{cor}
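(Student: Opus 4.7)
The plan is to deduce this corollary as an immediate consequence of the previous theorem, once we pin down the girth of a tournament. The key observation is that every tournament is either acyclic (in which case everything is trivial) or has girth exactly $3$.

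First I would dispense with the degenerate acyclic case. If $T$ is transitive, then $P(T;x) = x^n$ because every assignment of colors produces an acyclic partition of $V(T)$. In that case the coefficients of $x^{n-1}$ and $x^{n-2}$ are both $0$, and $T$ contains no directed triangle, so the statement holds vacuously.

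So assume $T$ contains at least one directed cycle, and let $v_1 \to v_2 \to \cdots \to v_\ell \to v_1$ be a shortest one. I would show $\ell = 3$ by the standard chord argument: if $\ell \geq 4$, consider the edge of $T$ between $v_1$ and $v_3$. If it is oriented $v_1 \to v_3$, then $v_1 \to v_3 \to v_4 \to \cdots \to v_\ell \to v_1$ is a directed cycle of length $\ell - 1$; if it is oriented $v_3 \to v_1$, then $v_1 \to v_2 \to v_3 \to v_1$ is a directed triangle. Either way we contradict the minimality of $\ell$. Hence the girth of $T$ is $g = 3$, and the cycles of length $g$ in $T$ are precisely the directed triangles.

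Now I would apply the previous theorem with $g = 3$. It asserts that the coefficients of $x^{n-1},\ldots,x^{n-g+2}$ in $P(T;x)$ vanish; here that range collapses to just $x^{n-1}$. It also asserts that the coefficient of $x^{n-g+1} = x^{n-2}$ equals minus the number of cycles of length $g = 3$, i.e., minus the number of directed triangles of $T$. This is exactly the conclusion of the corollary. There is no real obstacle here beyond the small tournament-theoretic fact that girth $\geq 4$ is impossible, which the chord argument dispatches in one line.
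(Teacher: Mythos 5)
Your proposal is correct and follows exactly the route the paper intends: the corollary is an immediate application of the preceding theorem with $g=3$, since any shortest directed cycle in a non-acyclic tournament is a triangle (your chord argument), and the acyclic case is trivial because then $P(T;x)=x^n$. The paper leaves these details unstated, so your write-up simply fills in the same argument explicitly.
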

 
 In tournaments, we know the maximum number of directed triangles and some other theorems about the directed cycles, so maybe we can compute and compare the dichromatic polynomials for some tournaments.
 For example let $D$ be an acyclic tournament on the vertex set
 $\{v_1,\ldots,v_n\}$, such that the out-neighbors of $v_i$ are 
 $v_1,\ldots,v_{i-1}$. Now, reverse the edges of the Hamiltonian path
 $v_n,v_{n-1},\ldots,v_1$. The resulting tournament is denoted by $S_n$.

\begin{thm}
 For every positive integer $n$,
 $$P(S_n;x)=\sum_{i=1}^n\binom{i}{n-i}x(x-1)^{i-1}$$
\end{thm}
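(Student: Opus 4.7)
The plan is to (i) characterize the directed cycles of $S_n$, (ii) translate ``no monochromatic cycle'' into a local condition on the color sequence along $v_1,v_2,\ldots,v_n$, and then (iii) count directly.

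For (i), I would first note that from any vertex $v_j$ the out-neighbors in $S_n$ are $v_{j+1}$ (when $j<n$, via the reversed Hamiltonian-path edge) together with $v_1,\ldots,v_{j-2}$ (the non-reversed original edges). I then claim that every directed cycle of $S_n$ has the form $v_a\to v_{a+1}\to\cdots\to v_{a+k-1}\to v_a$ for some $1\leq a\leq n-k+1$ and $k\geq 3$. Indeed, rotating a given cycle so that $v_{i_1}$ has smallest index, the only out-neighbor of $v_{i_1}$ with index at least $i_1$ is $v_{i_1+1}$, which forces $i_2=i_1+1$; the same reasoning iteratively forces $i_j=i_1+j-1$, until the closing edge $v_{i_k}\to v_{i_1}$ requires a ``down-jump'' of size at least $2$, hence $k\geq 3$.

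For (ii), it follows that the monochromatic cycles under a coloring are exactly the monochromatic consecutive intervals of length at least $3$, which occur iff some three consecutive vertices $v_j,v_{j+1},v_{j+2}$ share a color. Hence $P(S_n;x)$ equals the number of sequences $(c_1,\ldots,c_n)\in\{1,\ldots,x\}^n$ with no three consecutive equal entries. For (iii), I would decompose each such sequence into its maximal runs of equal values. Every run has length $1$ or $2$ (length $\geq 3$ being forbidden). If a sequence has exactly $i$ runs, then $n-i$ of them have length $2$ and $2i-n$ have length $1$, so the number of such run-length patterns is $\binom{i}{n-i}$. For each pattern, the first run's color can be any of $x$ colors, and each subsequent run's color any of the $x-1$ colors distinct from the previous run, giving $x(x-1)^{i-1}$ colorings. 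Summing over $i$ (with the convention $\binom{i}{n-i}=0$ when $i<n-i$) yields the claimed formula.

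The main obstacle is the cycle characterization in (i); once that is established, (ii) and (iii) are elementary. A quick alternative finish would be to check that the claimed closed form satisfies the recurrence $f_n=(x-1)(f_{n-1}+f_{n-2})$ with $f_1=x$ and $f_2=x^2$, which follows immediately from conditioning on whether $c_n=c_{n-1}$.
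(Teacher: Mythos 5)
Your proposal is correct, and it reaches the formula by a genuinely different route than the paper. You characterize the directed cycles of $S_n$ as exactly the consecutively traversed intervals $v_a\to v_{a+1}\to\cdots\to v_{a+k-1}\to v_a$ with $k\geq 3$, so properness becomes the condition that no three consecutive vertices share a color, and you then count such color sequences directly by decomposing them into maximal runs of length $1$ or $2$: the term $\binom{i}{n-i}x(x-1)^{i-1}$ counts the sequences with exactly $i$ runs, which gives each summand a transparent combinatorial meaning. The paper instead never describes the full cycle structure: it conditions on whether $v_{n-1}$ and $v_n$ receive the same color to derive the recurrence $f_n=(x-1)(f_{n-1}+f_{n-2})$ with $f_1=x$, $f_2=x^2$, and then extracts the closed form by a generating-function computation. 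Your direct count avoids the generating function entirely (and your remark that the closed form also satisfies the recurrence, via Pascal's rule, recovers the paper's route as a fallback), at the cost of having to nail down the cycle characterization. On that point, one small step deserves an explicit sentence: for vertices beyond $v_{i_1+2}$ the out-neighborhood contains several vertices of index at least $i_1$ (namely $v_{i_1},\ldots$ and $v_{i_1+j}$), so ``the same reasoning'' should be supplemented by the observation that the vertices of a directed cycle are distinct, hence a down-jump can only be the closing edge to $v_{i_1}$; with that noted, the argument is complete.
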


\begin{proof}
 Let $f_n(x)=P(S_n;x)$.
 It can be easily checked that $f_1(x)=x$ and $f_2(x)=x^2$. Let $n\geq3$. 
 The proper $k$-colorings of $S_n$ can be divided
 into two types: the colorings in which the colors of $v_n$ and $v_{n-1}$ are 
 different, and ones in which $v_n$ and $v_{n-1}$ have the same color.
 To get a coloring of the first type, it suffices to properly color the induced
 subgraph on $\{v_1,\ldots,v_{n-1}\}$, which is $S_{n-1}$, by $k$ colors 
 and then color $v_n$ by a color different from the color of $v_{n-1}$. 
 Thus the number of colorings of this type is $(k-1)f_{n-1}(k)$.
 In a similar way, for obtaining a coloring of the second type, one can properly 
 color the induced subgraph on $\{v_1,\ldots,v_{n-2}\}$,
 which is $S_{n-2}$, by $k$ colors and then color $v_{n-1}$
 and $v_{n}$ by a color that is different from that of $v_{n-2}$ (note that 
 $v_{n-2},v_{n-1},v_n$ form a cycle). Thus the number of the colorings of the
 second type is $(k-1)f_{n-2}(k)$. Therefore the number of proper $k$-colorings
 of $S_n$ is $(k-1)(f_{n-1}(k)+f_{n-2}(k))$ and we have the following 
 recurrence relation for $n\geq3$,
 
 $$f_n(x)=(x-1)\left(f_{n-1}(x)+f_{n-2}(x)\right).$$
 
 To solve this recurrence, we use the method of the generating functions. Define
 $$T(x,y)=\sum_{n=1}^{\infty}f_n(x)y^n$$
 Using the recurrence relation and initial conditions, we have
 \begin{eqnarray*}
 T(x,y) &=& xy+x^2y^2+(x-1)\sum_{n=3}^{\infty}
 \left(f_{n-1}(x)+f_{n-2}(x)\right)y^n\\
        &=& xy+x^2y^2+(x-1)y(T(x,y)-xy)+(x-1)y^2T(x,y)
 \end{eqnarray*}
 
 which gives
 
 $$T(x,y)=\frac{xy(y+1)}{1-(x-1)y(y+1)}.$$
 
 Now, Since
 
 $$\frac{1}{1-(x-1)y(y+1)}=\sum_{i=0}^{\infty}(x-1)^iy^i(y+1)^i,$$
 
 we get
 
 $$T(x,y)=x\sum_{i=1}^{\infty}(x-1)^{i-1}y^i(y+1)^i$$
 
 and the coefficient of $y^n$ in $T(x,y)$, which is $f_n(x)$, equals
 
 $$\sum_{i=1}^n\binom{i}{n-i}x(x-1)^{i-1},$$
 as desired.
\end{proof}

Next, we introduce the unique strongly connected tournament which has 
the maximum number of proper $k$-colorings for each $k>1$, among
all tournament of order $n$.
Let $D_n$ be the tournament with the vertex set 
$\{v_1,\ldots,v_n\}$ such that for $1\leq i<j\leq n$, where $i\neq1$
or $j\neq n$, the edge
between $v_i$ and $v_j$ is oriented from $v_i$ to $v_j$, and the edge between
$v_1$ and $v_n$ is oriented from $v_n$ to $v_1$. Clearly, $D_n$ is strongly
connected.

For a digraph $D$, $u,v\in V(D)$ and a positive integer $k$, 
let $P_{u\approx v}(D;k)$ be the
number of proper $k$-colorings of $D$ such that $u$ and $v$ have
the same colors, and $P_{u\not\approx v}(D;k)$ be the number of proper 
$k$-colorings of $D$ such that $u$ and $v$ have different colors.

The next lemma is an obvious observation about the chromatic polynomial of $D_n$.
\begin{lem}\label{Dnpoly}
 For two positive integers $n,k$, $P_{v_1\approx v_n}(D_n;k)=k(k-1)^{n-2}$,
 $P_{v_1\not\approx v_n}(D_n;k)=k^{n-1}(k-1)$. Moreover
 $P(D_n;k)=k(k-1)^{n-2}+k^{n-1}(k-1)$.
\end{lem}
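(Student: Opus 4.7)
The plan is to analyze the structure of the directed cycles in $D_n$ and then split into the two cases corresponding to the colors of $v_1$ and $v_n$. First I would observe that outside the single reversed edge $v_n\to v_1$, all remaining edges point from $v_i$ to $v_j$ with $i<j$, so they form a transitive (hence acyclic) tournament on $\{v_1,\ldots,v_n\}$. Consequently, every directed cycle of $D_n$ must use the edge $v_n\to v_1$, and thus has the form
\[
v_n\to v_1\to v_{i_1}\to v_{i_2}\to\cdots\to v_{i_t}\to v_n
\]
for some $1<i_1<i_2<\cdots<i_t<n$. In particular, every such cycle simultaneously contains $v_1$, $v_n$, and at least one interior vertex $v_i$ with $1<i<n$.

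For the count $P_{v_1\not\approx v_n}(D_n;k)$, I would argue that if $v_1$ and $v_n$ receive different colors, then every directed cycle already contains the bichromatic pair $\{v_1,v_n\}$ and therefore cannot be monochromatic, regardless of how the remaining $n-2$ vertices are colored. Hence such proper colorings are in bijection with the arbitrary assignments of colors to $v_1,v_n$ (with different colors) and to $v_2,\ldots,v_{n-1}$ (freely), giving $k(k-1)\cdot k^{n-2}=k^{n-1}(k-1)$.

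For $P_{v_1\approx v_n}(D_n;k)$, suppose $v_1$ and $v_n$ share color $c$. For every interior vertex $v_i$ with $1<i<n$, the triangle $v_1\to v_i\to v_n\to v_1$ is a directed $3$-cycle, and it is monochromatic iff $v_i$ also has color $c$. So the only obstruction comes from triangles, and we must assign each interior vertex a color distinct from $c$. Conversely, any such assignment automatically kills all longer cycles too, because every directed cycle passes through some interior $v_i$, whose color then differs from $c$. This yields $k$ choices for $c$ and $(k-1)^{n-2}$ independent choices for the interior vertices, giving $k(k-1)^{n-2}$. Adding the two cases gives the stated formula for $P(D_n;k)$.

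There is no real obstacle here; the whole argument rests on the structural observation that every cycle of $D_n$ must contain the unique back-edge $v_n\to v_1$ together with at least one interior vertex, which in turn reduces the proper-coloring condition to the single triangle constraint in the equal-colors case.
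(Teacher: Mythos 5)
Your proof is correct, and it fills in exactly the reasoning the paper leaves implicit: the paper states this lemma as an ``obvious observation'' with no proof, the intended justification being precisely your structural fact that every directed cycle of $D_n$ uses the back-edge $v_n\to v_1$ together with at least one interior vertex. Both case counts ($k^{n-1}(k-1)$ and $k(k-1)^{n-2}$) are handled correctly, so there is nothing to add.
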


\begin{lem}\label{allcycle}
 Let $T$ be a strongly connected tournament which has an edge $e=uv$ such
 that every directed cycle of $T$ contains $e$. Then $T=D_n$, $u=v_n$ and $v=v_1$.
\end{lem}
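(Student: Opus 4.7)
My plan is to exploit the hypothesis by looking at the subdigraph $T - e$. Because every directed cycle of $T$ uses the edge $e$, the subdigraph $T - e$ is acyclic and therefore admits a topological ordering $w_1, w_2, \ldots, w_n$, in which every edge of $T - e$ goes from a lower-indexed vertex to a higher-indexed one. Since $T$ is a tournament, $T - e$ contains an edge between every pair of vertices except the pair $\{u, v\}$. The goal is to identify this ordering with the labeling used in the definition of $D_n$.

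The crucial step is to locate $u$ and $v$ in the ordering. By construction the vertex $w_1$ has in-degree zero in $T - e$, so the only candidate for an in-edge to $w_1$ in $T$ is $e$; if $v \neq w_1$, then $w_1$ has in-degree zero in $T$ itself, contradicting strong connectivity. Hence $v = w_1$, and a symmetric argument with out-degrees at $w_n$ gives $u = w_n$. In particular $e = w_n w_1$.

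It then suffices to read off the orientations of the remaining edges. For any pair $\{w_i, w_j\}$ with $i < j$ and $(i, j) \neq (1, n)$, the edge between $w_i$ and $w_j$ lies in $T - e$, and by the topological property it is oriented $w_i \to w_j$. The only missing pair is $\{w_1, w_n\} = \{v, u\}$, joined in $T$ by $e$, i.e.\ $w_n \to w_1$. Setting $v_i := w_i$ therefore reproduces exactly the definition of $D_n$, and the identifications $u = v_n$, $v = v_1$ follow at once.

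The main obstacle I anticipate is that the topological order of $T - e$ is not automatically unique, so one must be careful that the labeling $v_i := w_i$ is well defined and yields $D_n$ on the nose. The strong-connectivity argument pins $v$ and $u$ to the two extreme positions, and since every pair of interior vertices is adjacent in $T - e$, their relative order is also forced by transitivity of the tournament $T - \{u,v\}$, so any topological order of $T - e$ is in fact unique and gives the required identification with $D_n$.
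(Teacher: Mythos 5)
Your proof is correct and follows essentially the same route as the paper: take a topological ordering of the acyclic digraph $T-e$ and use strong connectivity to force $e$ to run from the last vertex back to the first. You simply spell out in more detail the in-degree/out-degree argument that the paper compresses into one sentence.
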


\begin{proof}
 Let $T'=D-e$. By assumption, $T'$ is an acyclic digraph, so
 there is a topological ordering of the vertices of $T'$
 like $v_1,\ldots,v_n$. Since $T$ is a strongly 
 connected tournament, $e$ should be an edge from $v_n$ to $v_1$
 which implies that $T=D_n$.
\end{proof}

\begin{lem}\label{Puv}
 Let $T$ be a strongly connected digraph of order $n$ and $u,v$ be two
 distinct vertices of $T$.
 Then for every positive integer $k$, 
 $P_{u\not\approx v}(T;k)\leq k^{n-1}(k-1)$, and the equality holds 
 if and only if $T=D_n$, $u=v_n$ and $v=v_1$.
\end{lem}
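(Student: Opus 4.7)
The plan is to split the claim into a trivial upper bound, a combinatorial reformulation of the equality case, and a structural identification via Lemma \ref{allcycle}.

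First, the quantity $k^{n-1}(k-1)$ counts exactly the (not necessarily proper) $k$-colorings of $T$ in which $u$ and $v$ receive distinct colors: pick $c(u)$ in $k$ ways, pick $c(v)\neq c(u)$ in $k-1$ ways, and color each of the remaining $n-2$ vertices freely in $k$ ways. Since $P_{u\not\approx v}(T;k)$ counts only the proper ones among these, the inequality $P_{u\not\approx v}(T;k)\leq k^{n-1}(k-1)$ is immediate.

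For the equality characterization, I would argue that for any $k\geq 2$ the bound is attained if and only if every directed cycle of $T$ passes through both $u$ and $v$. If some directed cycle $C$ misses $u$ (the case of missing $v$ is symmetric), then painting $V(C)$ with a single color and choosing $c(u),c(v)$ distinct produces an improper coloring with $c(u)\neq c(v)$, contradicting equality. Conversely, if every cycle of $T$ contains both $u$ and $v$, then any coloring with $c(u)\neq c(v)$ forbids monochromaticity on every cycle and is automatically proper.

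The crux is the structural step: a strongly connected tournament $T$ in which every directed cycle contains both $u$ and $v$ must be $D_n$ with $\{u,v\}=\{v_1,v_n\}$. Since no cycle avoids $u$, the subdigraph $T-u$ is acyclic and hence a transitive sub-tournament with a unique topological order $w_1,\ldots,w_{n-1}$. Strong connectivity of $T$ forces $u\to w_1$ (otherwise $w_1$ would have in-degree zero) and $w_{n-1}\to u$, so the directed triangle $u\to w_1\to w_{n-1}\to u$ exists and by hypothesis must contain $v$; hence $v\in\{w_1,w_{n-1}\}$. Using the symmetry $P_{u\not\approx v}(T;k)=P_{v\not\approx u}(T;k)$, assume $v=w_1$. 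Then $v$ has no in-neighbor inside $T-u$, so $u$ is the unique in-neighbor of $v$ in $T$. Consequently every directed cycle of $T$ (each of which passes through $v$) must use the edge $e=uv$, and Lemma \ref{allcycle} applied to $e$ yields $T=D_n$ with $u=v_n$ and $v=v_1$.

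The main obstacle is the final structural step: pinning $v$ to an extreme end of the topological order of $T-u$, and then extracting a single edge that lies on every directed cycle so that Lemma \ref{allcycle} becomes applicable. Once those two short observations are established, the identification of $T$ as $D_n$ follows with no further case analysis.
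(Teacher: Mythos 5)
Your proposal is correct, and its middle portion takes a genuinely different route from the paper's. Both arguments obtain the inequality by the same trivial count and both finish with Lemma \ref{allcycle}, but the reduction to that lemma differs. The paper assumes equality for some $k>1$ and argues: if some cycle avoids the arc $uv$, then (``it can be seen'') some cycle $C$ of length $r$ avoids $u$ or $v$, and the estimate $P_{u\not\approx v}(T;k)\le (k^r-k)(k-1)k^{n-r-1}<k^{n-1}(k-1)$ gives a contradiction; hence every cycle uses the arc $uv$ and Lemma \ref{allcycle} applies. You instead characterize equality (for $k\ge2$) as ``every directed cycle passes through both $u$ and $v$'' by exhibiting a monochromatic cycle otherwise, and then argue structurally: $T-u$ is transitive, strong connectivity forces $u\to w_1$ and $w_{n-1}\to u$, the triangle $u\to w_1\to w_{n-1}\to u$ forces $v\in\{w_1,w_{n-1}\}$, and in the source case $u$ is the unique in-neighbour of $v$, so every cycle uses the arc $uv$. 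What your route buys: it avoids the counting estimate and, more importantly, it supplies a proof of exactly the step the paper leaves as ``it can be seen,'' a step that genuinely needs the tournament hypothesis --- for general strongly connected digraphs the equality case of the lemma is false (a directed $4$-cycle $u\to a\to v\to b\to u$ with the extra arc $u\to v$ attains equality), so your explicit use of ``tournament'' in the crux is the right reading, consistent with how the paper applies the lemma; what the paper's route buys is brevity once its unproved claim is granted. Two small touch-ups for your write-up: (i) the ``by symmetry assume $v=w_1$'' needs one more line --- if $v=w_{n-1}$ then $v\to u$, and after swapping the roles of $u$ and $v$ the same triangle argument shows $u$ cannot also be the sink of $T-v$ (that would force $u\to v$), so one does land in the source case, but the conclusion then reads $u=v_1$, $v=v_n$; since $P_{u\not\approx v}$ is symmetric in $u$ and $v$, the honest equality condition is $T=D_n$ and $\{u,v\}=\{v_1,v_n\}$, the same looseness already present in the paper's statement and proof; (ii) the equality discussion should exclude $k=1$, where both sides vanish for every $T$, as the paper's proof also implicitly does by taking $k>1$.
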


\begin{proof}
 Clearly $P_{u\not\approx v}(T;k)\leq k^{n-1}(k-1)$. 
 Now, suppose that there exist $k>1$ such that
 $P_{u\not\approx v}(T;k)\leq k^{n-1}(k-1)$. 
 If every cycle of $T$ contains $uv$, then
 by Lemma \ref{allcycle}, $T=D_n$, $u=v_n$ and $v=v_1$. Assume on the contrary
 that there is a cycle in $T$ not containing $uv$. It can be seen that
 there is a cycle $C$ in $T$ that does not contain either $u$ or $v$. Let
 $v\notin V(C)$ and $|V(C)|=r$. The number of proper $k$-colorings of $C$
 are $k^r-k$ and there are at most $(k-1)k^{n-r-1}$ to extend
 a $k$-proper coloring of $C$ to $T$.. Therefore,
 $$k^{n-1}(k-1)=P_{u\not\approx v}(T;k)\leq (k^r-r)(k-1)k^{n-r-1},$$
 which implies that $k^r\leq k^r-k$, a contradiction.
\end{proof}

\begin{thm}
 Let $T\neq D_n$ be a strongly connected tournament and $k>1$ be an integer.
 Then $P(T;k)<P(D_n;k)$.
\end{thm}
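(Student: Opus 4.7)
My plan is to fix some edge $u \to v$ of $T$ and decompose
\[
P(T;k) = P_{u \approx v}(T;k) + P_{u \not\approx v}(T;k),
\]
then bound each summand so that their sum is strictly below $P(D_n;k) = k(k-1)^{n-2} + k^{n-1}(k-1)$ provided by Lemma \ref{Dnpoly}. Lemma \ref{Puv} immediately gives $P_{u \not\approx v}(T;k) \leq k^{n-1}(k-1)$; since its equality case forces $T = D_n$ with $u = v_n, v = v_1$, the inequality is strict for every pair $(u,v)$ whenever $T \neq D_n$.

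For the other summand I aim to show $P_{u \approx v}(T;k) \leq k(k-1)^{n-2}$ by choosing the edge $u \to v$ so that every third vertex $w$ satisfies $v \to w$ and $w \to u$. Then $\{u, v, w\}$ is a directed triangle, so $w$ cannot share the color of $u$ and $v$ in any proper coloring, and each of the remaining $n-2$ vertices has at most $k-1$ admissible colors. Combined with the strict bound from Lemma \ref{Puv}, this would yield the desired inequality $P(T;k) < P(D_n;k)$.

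The hard part will be guaranteeing that such a pair exists for every strongly connected $T \neq D_n$. The triangle condition forces $d^+(u) = 1$ and $d^-(v) = 1$, which need not hold (e.g., in doubly regular tournaments every vertex has out-degree $(n-1)/2$). For those cases I would argue more carefully: pick $(u,v)$ so that cycles of $T$ on vertex sets disjoint from $\{u,v\}$ further constrain the coloring, as is the case for the pair $(0,2)$ in the Paley-like tournament on $5$ vertices, where the $3$-cycle on $\{1,3,4\}$ keeps the count at $k(k-1)^{n-2}$ even though vertex $1$ is not on a triangle with $\{0,2\}$. Alternatively, one can balance a weaker bound on $P_{u \approx v}(T;k)$ against the strict slack already obtained from Lemma \ref{Puv}, using the characterization in Lemma \ref{allcycle} that a bridge-like edge exists only when $T = D_n$.
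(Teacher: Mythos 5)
Your decomposition and the use of Lemma \ref{Puv} to get the strict bound $P_{u\not\approx v}(T;k)<k^{n-1}(k-1)$ for $T\neq D_n$ are fine, but the other half of your plan, $P_{u\approx v}(T;k)\leq k(k-1)^{n-2}$, is precisely what you have not proved, and it is false in general --- not merely hard to arrange. Your sufficient condition (every third vertex forms a directed triangle with the arc $u\to v$) forces $d^+(u)=d^-(v)=1$, which fails for all regular tournaments, and your proposed patch via the $5$-vertex Paley tournament is incorrect: let $R$ have vertices $0,\dots,4$ with $i\to i+1,\,i+2 \pmod 5$ and take the pair $(0,2)$. The cyclic triangles through both $0$ and $2$ are $\{0,2,3\}$ and $\{0,2,4\}$, and the only further constraint, coming from the triangle on $\{1,3,4\}$, is that $1,3,4$ are not all of one color; a direct count then gives $P_{0\approx 2}(R;k)=k(k-1)(k^2-k-1)$, which exceeds $k(k-1)^{3}$ by $k(k-1)(k-2)$ for every $k\geq 3$ (equality only at $k=2$). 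The same happens for every arc of $R$ (for the arc $0\to 1$ one gets $k(k-1)^2(k+1)$), so no choice of pair makes your second bound true there; the theorem survives only because $P_{u\not\approx v}(R;k)$ is correspondingly far below $k^{4}(k-1)$. The ``balancing'' you defer to in your last sentence is therefore the entire content of the proof, and you have not supplied it.

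For comparison, the paper never bounds $P_{u\approx v}(T;k)$ directly: it inducts on $n$, takes a cycle $C$ of length $n-1$ so that the leftover vertex $v$ has $T-v$ strongly connected, picks a triangle $vuw$ through $v$, and uses $P_{u\approx w}(T;k)\le(k-1)P_{u\approx w}(T-v;k)$ together with $P_{u\not\approx w}(T;k)\le k\,P_{u\not\approx w}(T-v;k)$; then the induction hypothesis bounds $P(T-v;k)$ by $P(D_{n-1};k)$ and Lemma \ref{Puv} is applied to $T-v$, after which the equality case is analyzed to force $T=D_n$. If you want to rescue your approach you would need a quantitative trade-off between the slack in Lemma \ref{Puv} and the excess of $P_{u\approx v}(T;k)$ over $k(k-1)^{n-2}$, which is essentially a different (and unproved) statement; as written, the argument has a genuine gap.
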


\begin{proof}
 The proof is by induction on $n$. For $n=1,2,3$ there is no strongly connected
 tournament of order $n$ other than $D_n$. So let $n\geq 4$.
 Since $T$ is strongly connected, there is a cycle $C$ of length $n-1$ in $T$.
 Let $V(T)\setminus V(C)=\{v\}$. Since $T$ is strongly connected,
 there is a triangle $vuw$ containing $v$. Now,
 
 $$P_{u\approx w}(T;k)\leq (k-1)P_{u\approx w}(T-v;k),$$
 $$P_{u\not\approx w}(T;k)\leq kP_{u\not\approx w}(T-v;k).$$
 
 Hence we find that
 $$P(T;k)\leq (k-1)P(T-v;k)+P_{u\not\approx w}(T-v;k).$$
 By the induction hypothesis and Lemma \ref{Dnpoly} we have,
 $$P(T-v,k)\leq P(D_{n-1},k)=k(k-1)^{n-3}+k^{n-2}(k-1),$$
 and by Lemma \ref{Puv} one can see that,
 $$P_{u\not\approx v}(T-v;k)\leq k^{n-2}(k-1).$$ 
 So the following holds:
 $$P(T;k)\leq k(k-1)^{n-2}+k^{n-1}(k-1)=P(D_n;k).$$
 
 Now, let $P(T;k)=P(D_n;k)$. Thus
 
 $$P_{u\approx w}(T;k)=(k-1)P_{u\approx w}(T-v;k),$$
 $$P_{u\not\approx v}(T-v;k)=k^{n-2}(k-1).$$ 
 
 Lemma \ref{Puv} and the last equality imply that $T-v$ is $D_{n-1}$, $w=v_1$
 and $u=v_{n-1}$. By the first equality, each proper $k$-coloring of $T-v$ in 
 which $u$ and $w$ have the same color $c$ can be extended to a coloring of $T$
 by assigning an arbitrary color different from $c$ to $v$. 
 Suppose that there is a 
 triangle $vu'w'$ such that $u'\neq u$ and $w'\neq w$. Choose a proper
 $k$-coloring of $T-v$ in which $u$ and $w$ have color $1$
 and $u'$ and $w'$ have color $2$. To extend this coloring to a proper
 $k$-coloring for $T$, we have at most $k-2$ choices for the color of $v$, 
 contrary to the assumption.
 Thus there is no such triangle and it can be easily seen that $T=D_n$.
\end{proof}


\end{document}